\newcommand{\cgame}{\langle C,h\rangle} 
\newcommand{\NN}{\mathbb{N}} 
\newtheorem{theo}{Theorem}[section]
\newtheorem{lemma}[theo]{Lemma}
\newtheorem{defi}[theo]{Definition}
\title{The Hat Guessing Number of Cactus Graphs and Cycles }
\author{\normalsize
Jeremy Chizewer\thanks{Corresponding Author.
Department of Combinatorics and Optimization, University of Waterloo,\hspace{2em}
Email: {\tt jchizewer@uwaterloo.ca}.
}\and\normalsize
I.M.J. McInnis
\thanks{Department of Mathematics, Princeton University, Email: {\tt imj.mcinnis@gmail.com}}
\and\normalsize
Mehrdad Sohrabi
\thanks{
Department of Combinatorics and Optimzation and Department of Computer Science, University of Waterloo, Email: {\tt msohrabi@uwaterloo.ca}.}
\and\normalsize
Shriya Kaistha 
\thanks
{Department of Computer Science, University of Waterloo,
Email: {\tt shriya.kaistha@uwaterloo.ca}}}
\begin{document}

\date{}
\maketitle
\begin{abstract}
We study the hat guessing game on graphs. In this game, a player is placed on 
each vertex $v$ of a graph $G$ and assigned a colored hat from $h(v)$
possible colors. Each player makes a deterministic guess on their hat color 
based on the colors assigned to the players on neighboring vertices, and 
the players win if at least one player correctly guesses his assigned color.
If there exists a strategy that ensures at least one player guesses
correctly for every possible assignment of colors, the game defined by
$\langle G,h\rangle$ is called winning. The hat guessing number of $G$ is the largest 
integer $q$ so that if $h(v)=q$ for all $v\in G$ then $\langle G,h\rangle$ is winning.

In this note, we determine whether $\langle G,h\rangle $ is winning for any $h$ whenever 
$G$ is a cycle, resolving a conjecture of Kokhas and Latyshev in the affirmative, and extending it. 
We then use this result to determine the hat guessing number of every cactus 
graph, graphs in which every pair of cycles share at most one vertex. 
%
\end{abstract}

\section{Introduction}
Given a simple undirected graph $G$ and a function $h:V(G)\to\NN$ mapping the vertices
of $G$ to the positive integers, the \emph{hat guessing game} denoted
$\langle G,h\rangle$ is a puzzle in which for every vertex $v\in G$ a player is placed and  
assigned a hat from $h(v)$ possible colors. Each player attempts to guess the color
of his hat according to a predetermined strategy that takes as input the colors assigned to 
players at neighboring vertices. The players are a team and are said to \emph{win} 
for a given coloring if at least one player guesses correctly. The hat guessing game 
$\langle G,h\rangle$ is said to be \emph{winning} if there exists some set of strategies for the 
players so that for every possible color assignment, the players win. The players \emph{lose} on
a coloring if no player guesses correctly; if there exists such a coloring for every possible 
set of guessing strategies, the game is called \emph{losing}.

We refer to the function $h$ as the \emph{hatness} function, and for a vertex $v\in G$ we call the 
value $h(v)$ the hatness of $v$. A \emph{subgame} $\langle G',h'\rangle$ of a hat guessing game 
$\langle G,h\rangle$ is a game where
$G'\subseteq G$ is a subgraph, and the hatness function $h' = h|_{v\in G'}$ is the hatness 
function $h$ restricted to the vertices of $G$ in $G'$. Moreover, we call a subgame \emph{proper} if
$G'$ is a proper subgraph of $G$. The \emph{hat guessing number} of a graph, 
$G$, denoted $HG(G)$, is the largest integer $q$ such that $\langle G, \star q\rangle$ is a winning 
game, where $\star q$ denotes the constant function $h(\cdot) =q$. 
We call a vertex $v$ \emph{deletable} if the subgame on $G\backslash\{v\}$ is winning. 
Note that if a game has a winning subgame then it is winning,
and if a game is winning then decreasing the hatness of any vertex results in a winning game.  

The hat guessing number was introduced in \cite{BHKL}.
Previous results have attempted to bound the hat guessing number of graphs, but few give 
exact answers. Some exact results include the hat guessing number of trees (folklore), 
cycles~\cite{WS}, pseudotrees~\cite{KL18}, complete graphs (folklore), and some windmills 
and book graphs~\cite{HIP}. There are also upper and lower bounds (that are somewhat far apart) 
on complete bipartite graphs~\cite{BHKL, GG, ABST},
upper bounds on outerplanar graphs~\cite{PB, KMS}, and 
lower bounds on planar graphs~\cite{AC, KL, LK}.

In this note, we study the hat guessing problem for cycles and cactus graphs. 
Our first result resolves~\cite[Conjecture 5.2]{KL21} affirmatively and goes further, showing exactly which games on cycles are winning. The second result adds 
cactus graphs to the growing list of graph classes for which the hat guessing number is known 
exactly.

\begin{theo}\label{theo:cycles}
Let $C$ be a cycle on $n$ vertices such that the hatness $h$ of each 
vertex $v$ in $C$ satisfies $h(v)\geq 2$. Then $\cgame$ is winning 
if and only if $C$ satisfies at least one of the following conditions:
\begin{enumerate}
\item The length is $4$ or divisible by $3$ and $h(v) \leq 3$ for all $v\in C$.\label{szc}
\item The length is $3$ and $\sum_{v\in C} \frac{1}{h(v)} \geq 1$.\label{k3}
\item The game $\cgame$ contains a winning proper subgame.\label{subgame}
\item The hatnesses satisfy both of the following properties:\label{props}
\begin{enumerate}
\item There exists a sequence of adjacent
vertices in $C$ with hatnesses $(2,3,3)$ or $(3,2,3)$.\label{seq}
\item For all $v\in C$, $h(v)\leq 4$, \label{atmost4}
\end{enumerate}
\end{enumerate} 
\end{theo}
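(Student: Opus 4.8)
The plan is to prove the two directions of the stated equivalence separately, relying throughout on the two monotonicity facts recorded above: possessing a winning subgame makes a game winning, and lowering any hatness preserves winning. For the \emph{sufficiency} direction I must show that each of the four conditions forces $\cgame$ to be winning, and three of these are immediate. Condition \ref{szc} follows from the known value $HG(C_n)=3$ when $n=4$ or $3\mid n$ \cite{WS}: since every $h(v)\le 3$, the game is obtained from $\langle C,\star 3\rangle$ by lowering hatnesses and is therefore winning. Condition \ref{k3} is the classical characterization of complete graphs, namely that $\langle K_m,h\rangle$ is winning exactly when $\sum_{v}1/h(v)\ge 1$, applied to $K_3$. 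Condition \ref{subgame} is the subgame remark itself. This leaves condition \ref{props} as the one part of the sufficiency direction that requires genuine work.

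For condition \ref{props} I would construct an explicit strategy, treating the distinguished block with hatnesses $(2,3,3)$ or $(3,2,3)$ together with the two arcs of the cycle emanating from it. On the block one designs a local strategy covering all of its colorings except for a controlled ``escape set,'' and on each remaining vertex (all of hatness at most $4$) one runs a propagating modular strategy whose role is to absorb exactly the escape colorings no matter how long the arc is. The surplus $\tfrac12+\tfrac13+\tfrac13>1$ carried by the block is what makes this absorption possible, and because the construction is driven by the local block rather than by the cycle length, it works for every $n$, matching the fact that condition \ref{props} imposes no length restriction. Verifying that the local and propagating strategies compose to cover every global coloring is the heart of this half.

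For the \emph{necessity} direction I argue that a winning game satisfies some condition. Since we are done if condition \ref{subgame} holds, I assume it fails, so the game is \emph{minimal winning}: no proper sub-path is a winning game and no edge carries two hatness-$2$ endpoints, which already constrains the hatness pattern sharply. Length $3$ is dispatched directly by the complete-graph characterization, yielding condition \ref{k3}. For length at least $4$ I use an adversary argument, formalized through the observation that a coloring on which every player errs is exactly a closed walk of length $n$ in the transfer graph on states $(c_i,c_{i+1})$ whose transitions encode $c_i\ne g_i(c_{i-1},c_{i+1})$; the game is losing precisely when every choice of strategies admits such a walk. Bounding the transitions available at each vertex, I would show in turn that (i) no vertex can have hatness at least $5$, so all hatnesses lie in $\{2,3,4\}$; (ii) if every hatness is at most $3$ then winning forces $n=4$ or $3\mid n$, giving condition \ref{szc}; and (iii) if some hatness equals $4$ then the absence of a $(2,3,3)$ or $(3,2,3)$ block lets the adversary complete a closed walk, so condition \ref{seq} must hold and, with step (i), condition \ref{atmost4} as well.

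I expect the main obstacle to be the adversary side of step (iii): pinning down precisely \emph{why the block $(2,3,3)$ is the exact threshold}. One must establish both that this gadget enables a winning strategy (the sufficiency of \ref{props}) and that, in its absence with every hatness in $\{2,3,4\}$, the adversary can always close a walk around the cycle. Making the transfer-graph bookkeeping tight enough to detect this threshold, while simultaneously tracking the residue of $n$ modulo $3$ and respecting the no-proper-winning-subgame reduction, is the crux of the whole argument.
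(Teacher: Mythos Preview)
Your plan is plausible but takes a genuinely different route from the paper. On sufficiency, you propose to build an explicit strategy for condition~\ref{props}; the paper does not do this at all and simply cites \cite[Theorem~5.1]{KL21}. On necessity, you intend a global adversary/transfer-graph argument in the style of \cite{WS}, tracking closed walks on states $(c_i,c_{i+1})$. The paper instead works entirely with \emph{constructors}: the gluing theorems for winning and losing games (Theorems~\ref{theo:glue-win} and~\ref{theo:glue-lose}) together with a new two-vertex deletion lemma (Lemma~\ref{lem:delete}) which, given consecutive vertices $t,u,v,w$ with $h(v)>h(u)$, removes $u,v$ from the cycle and lowers $h(t),h(w)$ by explicit amounts while preserving winningness. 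The necessity proof is then a finite case analysis. When \ref{atmost4} holds but \ref{seq} fails, the paper reduces to the two minimal patterns $(4,2,4,3,\dots,3)$ and $(4,2,3,4,3,\dots,3)$, fixes a least-guessed color on the central vertices, and collapses to a losing path via Theorem~\ref{theo:glue-lose}. When \ref{atmost4} fails, it applies Lemma~\ref{lem:delete} to each of the local patterns $(2,5,2)$, $(5,2,3,5)$, $(5,2,3,3)$, $(3,2,3,3)$, $(3,2,3,5)$ and again lands on a proper subpath that must be losing. Your transfer-graph approach, if it can be made to work, would give a uniform explanation of why $(2,3,3)/(3,2,3)$ is the exact threshold; the paper's constructor route sidesteps that global bookkeeping by leveraging known path results, at the price of a somewhat ad~hoc case split and the technical Lemma~\ref{lem:delete}. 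Be aware that the part you flag as hardest---step~(iii), closing a walk when some hatness is $4$ and no $(2,3,3)$ block is present---is exactly where the paper needs its most hands-on argument, so the difficulty you anticipate is real.
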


The forward implication for Condition~\ref{szc} is \cite[Theorem 1]{WS}; 
for Condition~\ref{k3}, it is \cite[Theorem 2.1]{KL21}; for
Condition~\ref{subgame}, it is trivial; and for Condition~\ref{props}, it is \cite[Theorem 5.1]{KL21}. 
We prove the reverse implications---that $\langle C,h\rangle$ is losing if 
none of the above conditions hold---in Section~\ref{sec:cycles}.

Our next result determines the hat guessing number of cactus graphs. 
A \emph{cactus graph} is a connected graph in which every pair of cycles share at
most one vertex. Figure~\ref{fig:glue-cactus} shows an example of a cactus graph (on the left). 
A useful subset of the class of cactus graphs is the class of \emph{pseudotrees}, which are 
connected graphs with at most one cycle. 
The hat guessing number of cactus graphs was first studied in \cite{PB},\footnote{The 
bound of $16$ was mentioned in the preprint 
(\href{https://arxiv.org/abs/2109.13422}{arXiv:2109.13422}) 
but was removed from the publication} 
which derived a bound of $16$ for any cactus graph. 
We apply Theorem~\ref{theo:cycles} to exactly determine the 
hat guessing number of every cactus graph.

\begin{theo}\label{theo:cactus}
Let $G$ be a cactus graph. 
\begin{enumerate}
\item $HG(G)=4$ if and only if $G$ contains at least two triangles.\label{two-t}
\item $HG(G)=3$ if and only if $G$ contains at least two cycles or 
a cycle of length $4$ or divisible by $3$, 
and $G$ contains fewer than two triangles.\label{one-t}
\item $HG(G)=2$ if and only if $G$ is a pseudotree with at least one edge and no
cycle of length $4$ or divisible by $3$.\label{no-t}
\end{enumerate} 
\end{theo}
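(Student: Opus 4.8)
The plan is to analyze a cactus through its block tree: its blocks are bridges and cycles, any two meeting in at most a single cut vertex, and each cycle block is governed by Theorem~\ref{theo:cycles} applied with the \emph{non-constant} hatness that arises once we glue (which is exactly why Theorem~\ref{theo:cycles} is proved for general $h$). For a cut vertex $v$ of hatness $q$ with blocks $B_1,\dots,B_t$ attached, the decisive quantity is, for each color $x$ of $v$ and each block $B_i$, the set $F_i(x)$ of colorings of $\mathrm{int}(B_i):=B_i\setminus\{v\}$ on which every interior player guesses wrong when $v$ is colored $x$; the strategy designer tries to keep these sets small, the adversary tries to force an overlap. I will establish matching lower and upper bounds for each of the three regimes $HG(G)\in\{2,3,4\}$, the lower bounds by exhibiting winning strategies and the upper bounds by forcing an all-wrong coloring.

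For the lower bounds I use two ingredients. First, by \ref{subgame}, whenever $G$ contains a cycle of length $4$ or divisible by $3$ that cycle is, at constant hatness $3$, a winning subgame by \ref{szc}/\ref{k3}, so $HG(G)\ge 3$; this gives the ``$\ge 3$'' half of part~\ref{one-t} in the special-cycle case, and any edge gives the ``$\ge 2$'' bounds at once. Second, the real content is the gluing construction. For two triangles meeting at a cut vertex $v$ at hatness $4$, each triangle has a two-vertex interior whose $16$ colorings split, for every color of $v$, into exactly an $8$-element failure set; I will choose the two interior strategies so that the four product boxes $F_1(x)\times F_2(x)$, one per color $x$ of $v$, \emph{partition} the $256$ joint interior colorings (concretely, realizing the per-color failure sets of one triangle as a set and its complement and using the second triangle to separate the remaining color classes). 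Then $v$, seeing both interiors, reads off its own color on every all-interior-fail coloring and guesses correctly, so $\langle G,\star 4\rangle$ is winning. The same tiling scheme, fed by Theorem~\ref{theo:cycles} (two non-special cycles each become winning once the shared vertex is reduced to two colors, via the $(3,2,3)$ pattern of \ref{seq}), gives the ``$\ge 3$'' half of part~\ref{one-t} in the two-cycle case. I will extend these from the minimal configurations to all qualifying cacti by subgame monotonicity and by running the tiling along the connecting bridges of the block tree when the cycles do not actually share a vertex.

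For the upper bounds I run the adversary through the block tree by induction. The key correspondence is that if $\langle G,h\rangle$ is winning then on every coloring where all interiors of the blocks at a cut vertex $v$ fail, player $v$ must guess correctly; hence the minimal product failure boxes $F_1(x)\times\cdots\times F_t(x)$ must be pairwise disjoint as $x$ ranges over all $q$ colors of $v$. I will show this is impossible in each losing regime: at constant hatness $5$ for every cactus (capping $HG\le 4$); at constant hatness $4$ when $G$ has fewer than two triangles, where at hatness $4$ a triangle is the only block type whose failure boxes are as small as half of its interior, and I argue that neither a single such half-box family nor the larger boxes coming from bridges and longer cycles can be tiled across four colors; and at constant hatness $3$ for a non-special pseudotree. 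Each step is packaged as an inductive invariant on a rooted block subtree recording the best achievable failure density at its cut vertex, evaluated on cycle blocks by Theorem~\ref{theo:cycles}, and the induction forces a globally all-wrong coloring whenever none of the winning conditions of Theorem~\ref{theo:cactus} hold.

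The main obstacle is the tightness of the upper bound---proving the failure boxes \emph{cannot} be tiled in precisely the borderline cases, where a naive union/counting bound is too weak. For example a windmill of four triangles at constant hatness $5$ has, by cardinality alone, enough total room for its failure boxes to be disjoint, yet it must be losing since $HG=4$; ruling it out requires exploiting the rigid structure of each triangle's failure box (a complement of two lines in a $5\times5$ grid) rather than its size, and the analogous rigidity at hatness $4$ is exactly what makes a single triangle lose while two triangles win. Establishing this combinatorial rigidity, and then propagating it correctly through an arbitrary block tree via the right density invariant, is where I expect the bulk of the work to lie; by contrast, once the tiling viewpoint is set up, the lower-bound constructions are comparatively routine.
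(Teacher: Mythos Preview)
Your lower-bound constructions are correct and equivalent in content to the paper's, which simply cites the gluing constructor Theorem~\ref{theo:glue-win} rather than unpacking the failure-set partition by hand.

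For the upper bounds, however, you have taken a substantially harder route than the paper, and your plan remains a sketch at exactly the step you yourself flag as the main obstacle. The paper bypasses the entire failure-box rigidity analysis by invoking the dual constructor, Theorem~\ref{theo:glue-lose}: if $G = C +_{v} G'$ with $C$ a leaf cycle, then $\langle G, \star q\rangle$ is losing as soon as $\langle G', \star q\rangle$ is losing and $\langle C, h\rangle$ is losing for $h(v)=2$ and $h(w)=q$ on $w \neq v$. The first hypothesis is the inductive step on a strictly smaller cactus; the second is precisely what Theorem~\ref{theo:cycles} supplies (a cycle with one vertex of hatness $2$ and the rest of hatness $5$ is always losing; with the rest of hatness $4$ it is losing once $|C| \geq 4$, so for Statement~\ref{one-t} one picks a non-triangle leaf cycle, available because there are at least two leaf cycles and at most one triangle). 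That is the entire upper-bound argument---no density invariant, no windmill combinatorics, no structural analysis of failure boxes. Your instinct that raw cardinality is too weak is correct, but the remedy is not a finer tiling analysis: Theorem~\ref{theo:glue-lose} already packages the needed combinatorial fact at hatness~$2$, and Theorem~\ref{theo:cycles} (in its non-constant form, which is why it was proved that way) does the work of pushing the cycle block down to hatness~$2$ at the cut vertex.
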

The lower bound for Statement~\ref{two-t} of the theorem follows from 
Theorem~\ref{theo:glue-win} by gluing together two triangles, each with 
hatnesses $(2,4,4)$ to either end of a path (possibly of length 0) with hatnesses
$(2,4,\dots,4,2)$ at the vertices with hatness $2$
(as shown in Figure~\ref{fig:glue-cactus} on the right). Since this forms a winning subgame of 
$\langle G, \star 4\rangle$, the game $\langle G, \star 4\rangle$ is winning. 
The lower bound for Statement~\ref{one-t} in the two cycles case follows analogously by gluing
two winning cycles with hatnesses $(2,3,\dots, 3)$ to either end of a winning path 
(possibly of length 0) with hatnesses $(2,3,\dots,3,2)$ and decreasing all hatnesses in the 
resulting game to $3$ to give a winning subgame of $\langle G, \star 3\rangle$. 
The lower bound for Statement~\ref{one-t} in the one cycle case
follows from \cite[Theorem 1]{WS}. The lower bound for Statement~\ref{no-t} is trivial. 
In Section~\ref{sec:cactus}, we prove the upper bounds.

\begin{figure}
\begin{center}
\begin{tikzpicture}
 \matrix (m) [matrix of math nodes,row sep=0.4cm,column sep=0.4cm, nodes={circle,draw,thick}]
  {
       |[ultra thick]|\ &  & |[ultra thick]|\ &  &     \\
       & |[ultra thick]|\ &  &  & |[ultra thick]|\    \\ 
       & & |[ultra thick]| \ &  |[ultra thick]|\ &|[ultra thick]| \ & |[ultra thick]|\  \\
          & \ &    &  & \ &   \ &  \    \\
        & \ &  &  & \ &   \\
 };
 \path[semithick]
	(m-4-2) 	edge (m-5-2)
			edge (m-3-3)
	(m-3-6)	edge (m-4-5)
			edge (m-4-6)
			edge (m-4-7)
	(m-4-5)	edge (m-5-5);

\path[ultra thick]
	(m-1-1) 	edge (m-1-3)
			edge (m-2-2)
	(m-2-2)	edge (m-1-3)
	(m-2-5)	edge (m-3-5)
			edge (m-3-6)
	(m-3-5)	edge (m-3-6)
	(m-2-2)	edge (m-3-3)
	(m-3-4)	edge (m-3-3)
			edge (m-3-5);
\end{tikzpicture}\hspace{2.5em}\begin{tikzpicture}
 \matrix (m) [matrix of math nodes,row sep=0.4cm,column sep=0.4cm, nodes={circle,draw,thick}]
  {
       |[ultra thick]| 4 &  & |[ultra thick]| 4 &  &  &   \\
        & |[ultra thick]| 2  \\
    &   |[ultra thick]| 2 & & & & |[ultra thick]| 4   \\ 
   &    & |[ultra thick]| 4 &  |[ultra thick]| 4 & |[ultra thick]| 2 & |[ultra thick]| 2 & |[ultra thick]| 4  \\
 };
\path[ultra thick]
	(m-1-1) 	edge (m-1-3)
			edge (m-2-2)
	(m-2-2)	edge (m-1-3)
	(m-3-2)	edge (m-4-3)
	(m-4-3)	edge (m-4-4)
	(m-3-6)	edge (m-4-6)
		 	edge (m-4-7)
	(m-4-4)	edge (m-4-5)	
	(m-4-6)	edge (m-4-7);	
\path[line width=3mm]			
	(m-2-2)	edge (m-3-2)	
	(m-4-5)	edge (m-4-6);	
\path[line width=2mm, white]			
	(m-2-2)	edge (m-3-2)	
	(m-4-5)	edge (m-4-6);		
\path[line width=0.5mm]			
	(m-2-2)	edge (m-3-2)	
	(m-4-5)	edge (m-4-6);	
\end{tikzpicture}
\end{center}
\caption{A cactus graph with two triangles (left) 
and the construction that shows that the hat guessing number is at least $4$ (right).
The triple lines indicate vertices that are glued together in the construction.}
\label{fig:glue-cactus}
\end{figure}
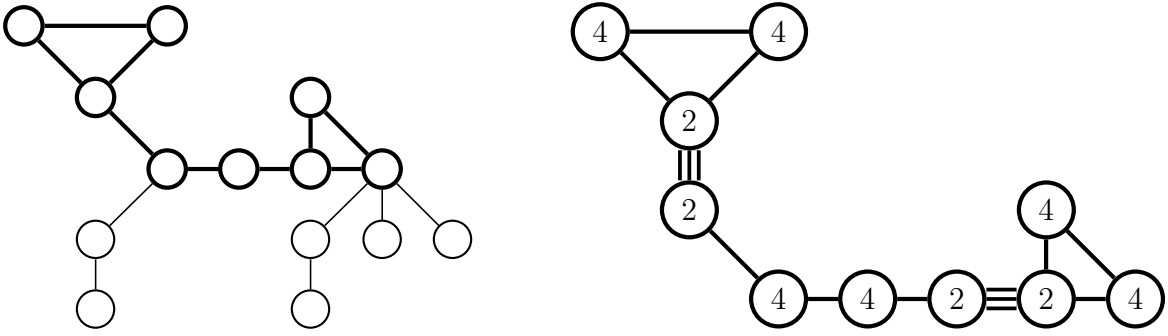

\section{Cycles}\label{sec:cycles}

Throughout this section and the next we use \emph{constructors}---rules for building 
winning or losing games from smaller games that are easier to analyze---to simplify our proofs. 
Given two graphs $G_1$ and $G_2$, with vertices $v_1$ and $v_2$, respectively, 
the graph denoted $G=G_1+_{v_1,v_2}G_2$ formed by \emph{gluing} 
$G_1$ and $G_2$ at $v_1$ and $v_2$ is given by the union of vertices and edges 
$G=G_1\cup G_2$ where $v_1$ and $v_2$ are identified as a single vertex in $G$ which is incident 
to all vertices in the union $N_{G_1}(v_1)\cup N_{G_2}(v_2)$ of their respective neighbor sets
in the original graphs. The following theorems, which concern the gluing of winning and losing graphs, 
are used as building blocks to prove our main results.

\begin{theo}[\text{\cite[Theorem 3.1]{KL21}}]\label{theo:glue-win}
Let $G_1$ and $G_2$ be graphs with $v_1\in G_1$ and $v_2\in G_2$ and let $G=G_1 +_{v_1,v_2}G_2$.
Let $h_1$ and $h_2$ be hatness functions for the graphs $G_1$ and $G_2$ respectively, and suppose
that $ \langle G_1,h_1\rangle$ and $\langle G_2,h_2 \rangle$ are both winning. 
Then $\langle G,h\rangle$ is winning where 
\[h(v)= \begin{cases} h_1(v) & \text{ if } v\in G_1\backslash\{v_1\} \\
h_2(v) & \text{ if } v\in G_2\backslash\{v_2\}\\
h_1(v_1)\cdot h_2(v_2) & \text{ if } v=v_1=v_2\\
\end{cases}\]
\end{theo}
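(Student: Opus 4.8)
The plan is to simulate both subgames in parallel, using the enlarged palette of the glued vertex to carry the information of both at once. Write the identified vertex as $v$, and identify its $h_1(v_1)\cdot h_2(v_2)$ colours with pairs $(a,b)$, where $a$ ranges over the $h_1(v_1)$ colours of $v_1$ in $\langle G_1,h_1\rangle$ and $b$ over the $h_2(v_2)$ colours of $v_2$ in $\langle G_2,h_2\rangle$. Any colouring of $G$ then restricts to a colouring of $G_1$ (colouring $v_1$ by $a$) and to a colouring of $G_2$ (colouring $v_2$ by $b$), and these two restrictions are what the players will reconstruct locally.

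First I would define the strategies. Each vertex $u\in G_1\setminus\{v_1\}$ plays its strategy from the winning game $\langle G_1,h_1\rangle$; its neighbours in $G$ are exactly its $G_1$-neighbours, and whenever $v$ is among them it reads off the first coordinate $a$ as the colour of $v_1$. Symmetrically, each $u\in G_2\setminus\{v_2\}$ plays its $\langle G_2,h_2\rangle$-strategy, reading the second coordinate $b$ for $v_2$. For the glued vertex $v$ itself: because the gluing makes $v$ adjacent to all of $N_{G_1}(v_1)\cup N_{G_2}(v_2)$, the vertex $v$ sees everything $v_1$ would see in $G_1$ and everything $v_2$ would see in $G_2$. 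It therefore computes the guess $g_1$ that $v_1$'s $G_1$-strategy produces and the guess $g_2$ that $v_2$'s $G_2$-strategy produces, and guesses the pair $(g_1,g_2)$.

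The correctness argument is a case analysis on a fixed colouring, which assigns $v$ the pair $(a,b)$. Because $\langle G_1,h_1\rangle$ is winning, some vertex of $G_1$ guesses correctly on the restricted colouring. If that vertex lies in $G_1\setminus\{v_1\}$, then since both its inputs and its true colour agree across the two games it also guesses correctly in $G$, and we are done; otherwise the only correct guesser is $v_1$, which forces $g_1=a$. The same dichotomy applied to $\langle G_2,h_2\rangle$ either finishes the proof via a correct guesser in $G_2\setminus\{v_2\}$, or forces $g_2=b$. If neither subgame is won by a non-shared vertex, then $g_1=a$ and $g_2=b$, so $v$ guesses $(a,b)$, its true colour. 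Hence in every case at least one player of $G$ guesses correctly, and $\langle G,h\rangle$ is winning.

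The only real subtlety—and the step I would check most carefully—is that the strategies of the non-glued vertices are simulated faithfully: this requires that gluing adds no edges between $G_1\setminus\{v_1\}$ and $G_2\setminus\{v_2\}$ and alters no neighbourhood except by replacing $v_1$ (respectively $v_2$) with the merged vertex $v$, so that each such player receives exactly the input its subgame strategy expects. Granting this, the glued vertex acts as a backstop that must be correct only in the single scenario where both $v_1$ and $v_2$ are the unique correct guessers of their respective subgames, and the product encoding is precisely what lets one guess cover that scenario.
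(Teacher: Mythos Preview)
Your argument is correct and is exactly the standard product-encoding proof of the gluing constructor. The paper itself does not supply a proof of this theorem---it is quoted as \cite[Theorem~3.1]{KL21}---so there is no in-paper argument to compare against, but what you have written is essentially the proof that appears in that reference.
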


\begin{theo}[\text{\cite[Theorem 4.1]{KLR}}]\label{theo:glue-lose}
Let $G_1$ and $G_2$ be graphs with $v_1\in G_1$ and $v_2\in G_2$ and let $G=G_1 +_{v_1,v_2}G_2$.
Let $h_1$ and $h_2$ be hatness functions for the graphs $G_1$ and $G_2$ respectively, and suppose
that $\langle G_1,h_1\rangle$ and $\langle G_2,h_2 \rangle$ are both losing with
$h_1(v_1)\geq h_2(v_2)=2$. Then $\langle G,h\rangle$ is losing where 
\[h(v)= \begin{cases} h_1(v) & \text{ if } v\in G_1 \\
h_2(v) & \text{ if } v\in G_2\backslash\{v_2\}
\end{cases}\]
\end{theo}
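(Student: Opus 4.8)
The plan is to argue directly from the definition of a losing game: $\langle G,h\rangle$ is losing precisely when, for \emph{every} choice of deterministic guessing strategies for the players, there is a hat assignment on which no player guesses correctly. Write $w=v_1=v_2$ for the glued vertex, set $q=h_1(v_1)$, and let $N_1=N_{G_1}(v_1)$ and $N_2=N_{G_2}(v_2)$ be the two neighbourhoods that $w$ inherits in $G$. I fix an arbitrary strategy profile $\sigma$ for the players of $G$ and aim to defeat it. The structure I would exploit is that every player in $G_1\setminus\{w\}$ reads only colors in $G_1$, every player in $G_2\setminus\{w\}$ reads only colors in $G_2$, and the single player at $w$ reads colors on $N_1\cup N_2$; so $\sigma$ almost splits into a strategy on each side, with $w$ the only point of coupling. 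The goal is to assemble, from the two subgame adversaries guaranteed by the losing hypotheses, one coloring of $G$ on which all players fail.

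First I would use $\langle G_1,h_1\rangle$ to handle the players of $G_1$ together with $w$. For a fixed coloring $\beta$ of $N_2$, substituting $\beta$ into the strategy of $w$ turns that strategy into a function of the colors on $N_1$ alone; combined with the unchanged strategies on $G_1\setminus\{w\}$, this is a legitimate strategy profile for $\langle G_1,h_1\rangle$, in which $w$ genuinely retains its full $q$ colors (here the hypothesis $h_1(v_1)\geq h_2(v_2)$, which lets us keep the larger hatness at $w$, is what makes this faithful). Since $\langle G_1,h_1\rangle$ is losing, its adversary returns a coloring $\chi_1^{\beta}$ of $G_1$ on which every $G_1$-player, \emph{including} $w$, guesses incorrectly; I record the resulting color $a(\beta)\in\{1,\dots,q\}$ at $w$. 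Thus for each boundary coloring $\beta$ of $N_2$ I obtain a color $a(\beta)$ at $w$ and a coloring of $G_1\setminus\{w\}$ that simultaneously defeats all of $G_1$, no matter how the rest of $G_2$ is colored, because $w$'s guess depends only on the colors of $N_1\cup N_2$. It then remains to color $G_2\setminus\{w\}$ so that those players fail as well, and for this I would invoke the losing adversary for $\langle G_2,h_2\rangle$ applied to the strategies $\sigma|_{G_2\setminus\{w\}}$.

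The main obstacle is the circular dependence at $w$: the color $a(\beta)$ forced by the $G_1$-side is a function of the colors $\beta$ on $N_2$, while the coloring of $N_2$ that the $G_2$-adversary wishes to impose depends in turn on the color seen at $w$. Reconciling these into one consistent assignment is exactly where the hypothesis $h_2(v_2)=2$ is essential, and I would stress that monotonicity alone does not suffice, since the glued vertex keeps only $h_1(v_1)$ colors rather than the product $h_1(v_1)\cdot h_2(v_2)$ of the winning-gluing theorem. Because $w$ offers the $G_2$-side only two colors, the data that $w$ transmits into $G_2$ is a single bit, so the interaction between the forced value $a(\beta)$ and the $G_2$-players can be collapsed to a binary alternative that the two colors of $w$ in $\langle G_2,h_2\rangle$ are able to absorb. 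Concretely, I would transfer $\sigma|_{G_2\setminus\{w\}}$ to a strategy profile on $\langle G_2,h_2\rangle$ in which the two colors of $w$ encode the two relevant behaviours of $a(\beta)$, apply the losing property of $\langle G_2,h_2\rangle$ to obtain a coloring on which every $G_2$-player fails, and read off from it both the boundary coloring $\beta$ and a compatible choice of $a(\beta)$. The delicate point within this step is informational locality, namely that an individual $G_2\setminus\{w\}$ player need not see all of $N_2$ and hence cannot itself compute $a(\beta)$; resolving this is where I expect the real work to lie, and it is precisely what the two-color restriction at $v_2$ is designed to make manageable. Splicing $\chi_1^{\beta}$, the color $a(\beta)$ at $w$, and the coloring of $G_2\setminus\{w\}$ then yields a single assignment defeating $\sigma$, so $\langle G,h\rangle$ is losing.
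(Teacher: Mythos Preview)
The paper does not give its own proof of this theorem; it is quoted verbatim from \cite[Theorem~4.1]{KLR} and used throughout as a black box. So there is no in-paper argument to compare your proposal against, and the evaluation below is of your sketch on its own merits.

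Your set-up is sound and matches the natural line of attack: freeze an arbitrary strategy $\sigma$ on $G$, and for each coloring $\beta$ of $N_2$ substitute $\beta$ into $w$'s rule to obtain a bona fide strategy profile on $\langle G_1,h_1\rangle$; the losing hypothesis then yields a coloring $\chi_1^{\beta}$ of $G_1$ and a color $a(\beta)\in[q]$ at $w$ defeating every $G_1$-player. You are also right that the whole difficulty is the circular dependence between $a(\beta)$ and the $G_2$-adversary's choice of $\beta$, and that $h_2(v_2)=2$ is exactly the hypothesis meant to break it (without it the conclusion is simply false, since one would only get $h(w)=h_1(v_1)\cdot h_2(v_2)$ rather than $h_1(v_1)$).

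Where the proposal falls short is that it names this crux but does not resolve it. Writing that ``the two colors of $w$ encode the two relevant behaviours of $a(\beta)$'' is not yet an argument: a player $u\in N_2$ in $\langle G_2,h_2\rangle$ sees only $v_2$'s bit together with its own neighbours, not all of $N_2$, so it cannot privately compute $a(\beta)$ and thereby simulate its $\sigma$-strategy; and you explicitly flag this locality issue as unresolved. What the hypothesis $h_2(v_2)=2$ genuinely buys is a dichotomy of the following shape: if $\langle G_2,h_2\rangle$ is losing, then for \emph{every} strategy profile on $G_2\setminus\{v_2\}$ and for \emph{each} of the two colors of $v_2$ there is a coloring of $G_2\setminus\{v_2\}$ on which all those players fail (otherwise $v_2$ could always guess the unique ``safe'' color and the game would be winning). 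A complete proof has to combine this with a deliberate restriction of $w$'s palette in $G$ to a well-chosen pair of colors from $[q]$, so that the $G_2$-adversary's output is automatically compatible with some value of $a(\beta)$ coming from the $G_1$ side. Your outline gestures at this but does not carry it out, so as a proof it is incomplete precisely at the step you yourself identify as ``where the real work lies.''
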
 

With these results in mind, we are ready to move on to the proof of the reverse implications in Theorem~\ref{theo:cycles}. To prove this direction, we must show that
for a game $\cgame$ when none of the conditions hold, the game is losing. 
If none of the conditions hold and $n=3$, the result follows from \cite[Theorem 2.1]{KL21}.
Hence, we may assume $n>3$ for the rest of the proof. Further, if 
there are no vertices of hatness $2$, the result holds by \cite[Theorem 1]{WS},
so we may assume for the rest of the proof that there is at least one vertex $a$ with $h(a)=2$.
We split the proof into two parts: Condition~\ref{props} fails because Condition~\ref{seq} fails,
and Condition~\ref{props} fails because Condition~\ref{atmost4} fails.

\subsection{Proof of Theorem~\ref{theo:cycles} when Condition~\ref{seq} Fails}

\begin{proof}
We begin by assuming that Conditions~\ref{szc},~\ref{k3},~\ref{subgame}, and~\ref{seq}
all fail, and Condition~\ref{atmost4} holds. Given these assumptions, let $C$ be a cycle 
of length $n>3$ such that for every $v\in C$ we have $2\leq h(v)\leq 4$,  
and let $h(a)=2$ for some vertex $a\in C$. As a reminder, the assumptions also imply $\cgame$ 
does not contain a winning subgame and the hatness sequences $(3,2,3)$ and $(2,3,3)$ 
do not appear. Note that $a$ is the only vertex with hatness $2$ as otherwise 
Condition~\ref{subgame} would be satisfied by repeated application of 
Theorem~\ref{theo:glue-win} to the game $\langle K_2, \star 2\rangle$.

Based on the supposition, we have two possible minimal cases, as suggested in \cite{KL21}. 
It suffices to prove the claim for these minimal cases as increasing the hatnesses of 
any vertex cannot cause a losing game to become winning.
The first case is that the hatnesses of $a$'s neighbors are 
each $4$, and every other hatness is $3$. The second case is that $a$ has one neighbor
$b$ with hatness $3$ and one with hatness $4$, and the second neighbor
of $b$ has hatness $4$. Any other vertices in the cycle have hatness $3$ if they exist. 
We prove the result for these two possibilities separately, although the technique
is quite similar. 
\begin{figure}\begin{center}
\begin{tikzpicture}
  \def\n{5}
  \tkzDefPoint(0,0){O}
  \tkzDefPoint(180:2){a}
  \tkzDefPoint(108:2){b}
  \tkzDefPoint(36:2){c}
  \tkzDefPoint(324:2){d}
  \tkzDefPoint(252:2){v}

  \tkzDrawArc[delta=-0.5cm, black](O,b)(a) 
  \tkzDrawArc[delta=-0.5cm, black](O,c)(b)
  \tkzDrawArc[delta=-0.5cm, black](O,a)(v)
  \tkzDrawArc[delta=-0.5cm, black](O,v)(d)
  \tkzDrawArc[delta=-0.5cm, black](O,d)(c)
  \foreach \x [count=\i] in {v,a,b,c,\,\dots}{
    \pgfmathsetmacro{\angle}{180 - 360/\n * (\i - 2)}
    \node[circle, draw, minimum size=1cm] (\i) at (\angle:2cm) {$\x$};
  }
  \foreach \x [count=\i] in {4,2,4,3,3}{
    \pgfmathsetmacro{\angle}{180 - 360/\n * (\i - 2)}
        \node at (180-\i*360/\n+2*360/\n:3cm) {$\x$ };
  }
  
\end{tikzpicture}
\hspace{2cm}
\begin{tikzpicture}
  \def\n{5}
 \tkzDefPoint(0,0){O}
  \tkzDefPoint(180:2){a}
  \tkzDefPoint(108:2){b}
  \tkzDefPoint(36:2){c}
  \tkzDefPoint(324:2){d}
  \tkzDefPoint(252:2){v}
  \tkzDrawArc[delta=-0.5cm, black](O,b)(a) 
  \tkzDrawArc[delta=-0.5cm, black](O,c)(b)
  \tkzDrawArc[delta=-0.5cm, black](O,a)(v)
  \tkzDrawArc[delta=-0.5cm, black](O,v)(d)
  \tkzDrawArc[delta=-0.5cm, black](O,d)(c)
  \foreach \x [count=\i] in {v,a,b,c,\,\dots}{
    \pgfmathsetmacro{\angle}{180 - 360/\n * (\i - 2)}
    \node[circle, draw, minimum size=1cm] (\i) at (\angle:2cm) {$\x$};
  }
  \foreach \x [count=\i] in {4,2,3,4,3}{
    \pgfmathsetmacro{\angle}{180 - 360/\n * (\i - 2)}
        \node at (180-\i*360/\n+2*360/\n:3cm) {$\x$ };
  }
\end{tikzpicture}\vspace{-1em}
\end{center}
\caption{The two cases described in the proof of Theorem~\ref{theo:cycles}, 
where the ellipsis indicates an arbitrary (possibly 0) number of vertices with 
hatness $3$.}
\label{fig:cycles}
\end{figure}
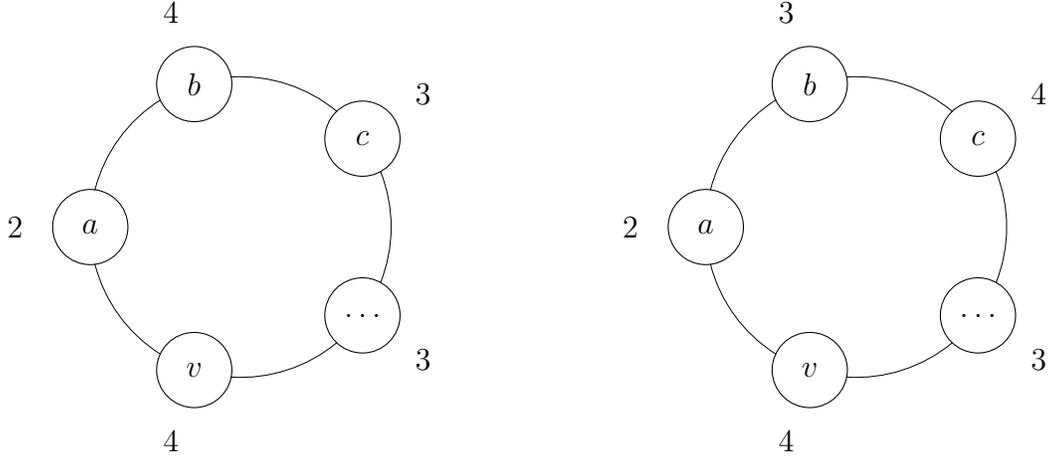

In the first case, let $v,a,b,c$ be a sequence of vertices in the cycle with 
\[h(v)=4,\,h(a)=2,\,h(b)=4,\,h(c)=3\] 
and any other vertices have hatness 3,
as shown on the left side of Figure~\ref{fig:cycles}.
\begin{figure}
\begin{center}
\begin{tikzpicture}
  \def\n{5}
  \tkzDefPoint(0,0){O}
  \tkzDefPoint(180:2){a}
  \tkzDefPoint(108:2){b}
  \tkzDefPoint(36:2){c}
  \tkzDefPoint(324:2){d}
  \tkzDefPoint(252:2){v}

  \tkzDrawArc[delta=-0.5cm, black, dashed](O,b)(a) 
  \tkzDrawArc[delta=-0.5cm, black, dashed](O,c)(b)
  \tkzDrawArc[delta=-0.5cm, black, dashed](O,a)(v)
  \tkzDrawArc[delta=-0.5cm, black](O,v)(d)
  \tkzDrawArc[delta=-0.5cm, black](O,d)(c)
  \foreach \x [count=\i] in {c,\,\dots,v}{
    \pgfmathsetmacro{\angle}{180 - 360/\n * (\i - 4)}
    \node[circle, draw, minimum size=1cm] (\i) at (\angle:2cm) {$\x$};
    }
  \foreach \x [count=\i] in {a,b}{
    \pgfmathsetmacro{\angle}{180 - 360/\n * (\i - 1)}
    \node[circle, draw, minimum size=1cm, dashed] (\i) at (\angle:2cm) {$\x$};
  }
  \foreach \x [count=\i] in {3,,,2,3}{
    \pgfmathsetmacro{\angle}{180 - 360/\n * (\i - 2)}
        \node at (180-\i*360/\n+2*360/\n:3cm) {$\x$ };
  }
\end{tikzpicture}
\hspace{2cm}
\begin{tikzpicture}
  \def\n{5}
  \tkzDefPoint(0,0){O}
  \tkzDefPoint(180:2){a}
  \tkzDefPoint(108:2){b}
  \tkzDefPoint(36:2){c}
  \tkzDefPoint(324:2){d}
  \tkzDefPoint(252:2){v}
  \tikzset{compass style/.append style={black, dashed}}
  \tkzDrawArc[delta=-0.5cm, black, dashed](O,b)(a) 
  \tkzDrawArc[delta=-0.5cm, black, dashed](O,c)(b)
  \tkzDrawArc[delta=-0.5cm, black, dashed](O,a)(v)
  \tikzset{compass style/.append style={solid}}
  \tkzDrawArc[delta=-0.5cm, black](O,v)(d)
  \tkzDrawArc[delta=-0.5cm, black](O,d)(c)
  \foreach \x [count=\i] in {c,\,\dots,v}{
    \pgfmathsetmacro{\angle}{180 - 360/\n * (\i - 4)}
    \node[circle, draw, minimum size=1cm] (\i) at (\angle:2cm) {$\x$};
    }
  \foreach \x [count=\i] in {a,b}{
    \pgfmathsetmacro{\angle}{180 - 360/\n * (\i - 1)}
    \node[circle, draw, minimum size=1cm, dashed] (\i) at (\angle:2cm) {$\x$};
  }
  \foreach \x [count=\i] in {2,,,3,3}{
    \pgfmathsetmacro{\angle}{180 - 360/\n * (\i - 2)}
        \node at (180-\i*360/\n+2*360/\n:3cm) {$\x$ };
  }
\end{tikzpicture} \vspace{-1em}
\end{center}
\caption{The two cases described in the proof of Theorem~\ref{theo:cycles}, 
after coloring $a$ and $b$ and committing to a subset of colors for $c$ and $v$.
The ellipsis indicates an arbitrary (possibly 0) number of vertices with 
hatness $3$.}
\label{fig:cycles-post}
\end{figure}
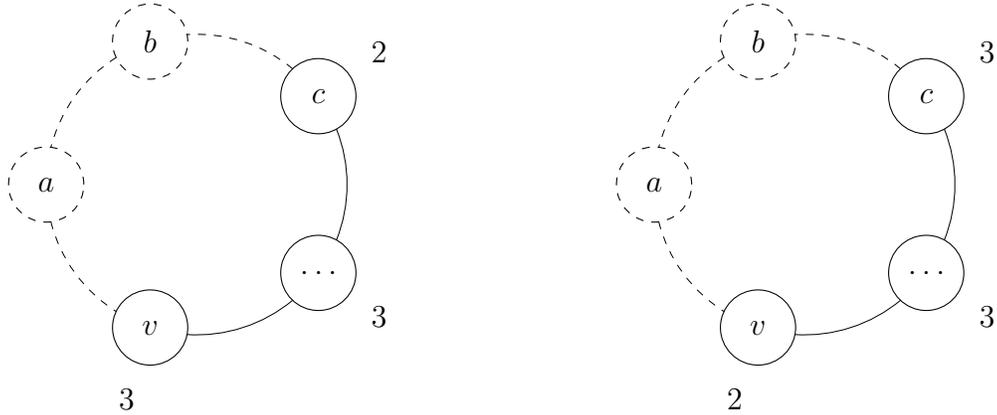
Let $x_b$ be the color guessed 
least frequently by vertex $b$ (breaking ties arbitrarily). By the pigeonhole principle, 
$b$ can only guess $x_b$ at most once. Let $x_a$ and $x_c$ be the colors
of $a$ and $c$, respectively, when $b$ guesses $x_b$ (if $x_b$ is never guessed,
$a$ and $c$ can be chosen arbitrarily). Let $X_v$ 
be the set of colors for $v$ so that when $b$ is colored
$x_b$, and $v$ is colored from $X_v$, $a$ guesses color $x_a$. 
Suppose toward a contradiction that $|X_v|\geq 2$. Then we commit to coloring $a$ with the 
color which is not $x_a$, $b$ with the color $x_b$, $c$ with any of the three colors,
and $v$ with a color from $X_v$, so $a$ and $b$ guess incorrectly. Hence,
$\cgame$ is winning only if there exists a winning strategy for
the game on a path of length at least $2$ where every vertex 
has hatness $3$ except for one vertex with hatness $2$ as depicted in the right 
side of Figure~\ref{fig:cycles-post}. However, Theorem~\ref{theo:glue-lose}
implies that this path is losing. Hence, in any winning strategy,
we may assume that $|X_v|\leq 1$. Thus, we can commit to coloring $a$ 
with $x_a$, $v$ with one of at least $3$ colors not in $X_v$, coloring $b$ 
with $x_b$ and $c$ with one of the two colors not equal to $x_c$. 
This ensures again that $a$ and $b$ each guess incorrectly, 
and we again reduce to the path where every vertex has hatness $3$ 
except one with hatness $2$ (in this case depicted on the left side of Figure~\ref{fig:cycles-post}), 
which is losing. Hence, $C$ is losing. 

Next, we prove the result in the second case, where we have
a sequence of vertices $v,a,b,c$ in the cycle with 
\[h(v)=4,\,h(a)=2,\,h(b)=3,\,h(c)=4,\] and any other vertices in the cycle have hatness 
$3$ if they exist (as shown on the right side of Figure~\ref{fig:cycles}).
 As before, we let $x_b$ be the least frequently guessed 
color for $b$, breaking ties arbitrarily. By the pigeonhole principle, $x_b$ is guessed 
at most twice. Let $(x_a,x_c)$ and $(y_a,y_c)$ be the colorings of $(a,c)$
for which $b$ guesses $x_b$. If $x_b$ is guessed at most once
then we can choose arbitrary values for any undetermined pairs. Now, there are 
three possibilities to consider. The first case is $x_c=y_c$; the second is $x_a=y_a$; 
the third is $x_a\neq y_a$ and $x_c\neq y_c$. 

In the first case, we commit to coloring $b$ with the color $x_b$, 
and $c$ with a color other than $x_c$. This reduces 
to a path where the first vertex has hatness $2$, the second vertex has 
hatness $4$, and the remaining vertices have hatness $3$, which is 
losing by Theorem~\ref{theo:glue-lose}. 

In the second case, we proceed as in the $(4,2,4,3)$ case. 
Let $X_v$ be the set of colors for $v$ such that
$a$ guesses $x_a$. As before, if $|X_v|\geq 2$ we color $a$ 
with the color not equal to $x_a$, $v$ from $X_v$ and $c$ arbitrarily; 
otherwise, we color $a$ with $x_a$, $v$ from colors not in $X_v$ and $c$ 
from colors not $x_c$ or $y_c$. As before, these choices ensure that $a$ and $b$
guess incorrectly and leave losing paths. (Shown on the right and left sides 
of Figure~\ref{fig:cycles-post}, respectively. We actually
have $4$ possible colors for $c$ in the first case but only $3$ are needed.) 

Finally, in the third case, assume without loss of generality that 
$a$ guesses $x_a$ at most as frequently as $y_a$ when $b$ is 
colored $x_b$. Then we can commit to coloring $b$ with the color
$x_b$, vertex $a$ with the color $x_a$, and $c$ with a color
that is not $x_c$, of which there are $3$ choices. 
Since $x_a$ is guessed at most as frequently as $y_a$, there are at least 
$2$ colors for $v$ such that $a$ does not guess
$x_a$, and  we can again reduce the game to a losing path (in this case,
the one shown on the right in Figure~\ref{fig:cycles-post}). Thus,
$\cgame$ is losing in all cases when Conditions~\ref{szc},~\ref{k3},~\ref{subgame},~and~\ref{seq}
fail.\end{proof}
It remains to show that $\cgame$ is losing when 
Conditions~\ref{szc},~\ref{k3},~\ref{subgame},~and~\ref{atmost4} fail. 

\subsection{Proof of Theorem~\ref{theo:cycles} when Condition~\ref{atmost4} Fails}

We begin with lemmas to simplify the argument for this case.
Our first lemma allows us to delete two vertices of a cycle,
leaving behind a path that is winning if the original cycle
is winning. An application of the lemma is depicted in Figure~\ref{fig:d2vlem}.

\begin{lemma}\label{lem:delete}
Let $C=(V,E)$ be a cycle of length at least $4$, 
let $t,u,v,w\in V$ be consecutive vertices in the cycle so that $(t,u),(u,v),(v,w)\in E$,
and let $h(\cdot)$ be a hatness function with $h(v)>h(u)$.
We define 
\[h'(x)=\begin{cases}
\left\lceil h(w) \left(1-\left\lceil\frac{h(v)}{h(u)}\right\rceil^{-1}\right)\right\rceil & \text{if } x=w\\
h(t)-\left\lfloor \frac{h(t)}{h(u)}\right\rfloor & \text{if }  x = t\\
h(x) &  \text{if } x\in V\backslash \{t,u,v,w\}
\end{cases}\]
If $\cgame$ is winning then $\langle C\backslash \{u,v\}, h'\rangle$ is winning.
\end{lemma}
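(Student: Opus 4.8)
The plan is to transfer a winning strategy on the cycle to the path $P:=C\setminus\{u,v\}$. Deleting the two consecutive vertices $u,v$ turns $C$ into a path whose endpoints are $t$ and $w$; every surviving vertex other than $t,w$ keeps both of its cycle-neighbours, since neither $u$ nor $v$ is adjacent to any of them. The only players who lose a neighbour are $t$ (which no longer sees $u$) and $w$ (which no longer sees $v$), and these are precisely the two vertices whose hatness is reduced. Fix a winning strategy on $\cgame$ and, for each vertex $x$, let $g_x$ denote its guessing function in that strategy. My path strategy is to commit, once and for all, to a colour $c_u$ for $u$ and a colour $c_v$ for $v$; let every interior vertex of $P$ guess exactly as in the cycle; let $t$ play $g_t(c_u,\cdot)$ and $w$ play $g_w(c_v,\cdot)$. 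Because $c_u,c_v$ are fixed constants, these are legitimate path strategies, as $t$ and $w$ each need only the colour of their surviving neighbour.

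For this simulation to certify that $P$ is winning I need that, with $u$ coloured $c_u$ and $v$ coloured $c_v$, both $u$ and $v$ guess \emph{incorrectly} on every colouring I allow, after suitably shrinking the palettes of $t$ and $w$. Since $u$ sees only $t$ and $v$, once $c_v$ is fixed its guess $g_u(\cdot\,,c_v)$ depends only on $t$'s colour; deleting the fibre $g_u(\cdot\,,c_v)^{-1}(c_u)$ from $t$'s palette makes $u$ always wrong, at the cost of $|g_u(\cdot\,,c_v)^{-1}(c_u)|$ colours. Symmetrically, once $c_u$ is fixed, $v$'s guess $g_v(c_u,\cdot)$ depends only on $w$'s colour, and deleting $g_v(c_u,\cdot)^{-1}(c_v)$ makes $v$ always wrong. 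Writing $m=\lceil h(v)/h(u)\rceil$, a direct computation gives $h(t)-h'(t)=\lfloor h(t)/h(u)\rfloor$ and $h(w)-h'(w)=\lfloor h(w)/m\rfloor$, so it suffices to find a pair $(c_u,c_v)$ with
\[
\left|g_u(\cdot\,,c_v)^{-1}(c_u)\right|\le \left\lfloor \frac{h(t)}{h(u)}\right\rfloor
\qquad\text{and}\qquad
\left|g_v(c_u,\cdot)^{-1}(c_v)\right|\le \left\lfloor \frac{h(w)}{m}\right\rfloor .
\]

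The existence of such a pair is the heart of the argument and the step I expect to be the main obstacle; it is also where the hypothesis $h(v)>h(u)$ is used. I would bound the number of \emph{bad} pairs. For each fixed $c_v$, the colours $c_u$ violating the first inequality have fibres of size exceeding $\lfloor h(t)/h(u)\rfloor$, and since these fibres are disjoint subsets of a set of size $h(t)$ they number fewer than $h(u)$, hence at most $h(u)-1$. Likewise, for each fixed $c_u$, at most $m-1$ colours $c_v$ violate the second inequality. Therefore the number of pairs failing at least one condition is at most $(h(u)-1)h(v)+(m-1)h(u)$, which is strictly less than $h(u)h(v)$ exactly when $(m-1)h(u)<h(v)$; and this holds because $m-1<h(v)/h(u)$ by the definition of the ceiling. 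Consequently a good pair $(c_u,c_v)$ exists.

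Finally I would assemble the pieces. Fix such a pair, restrict $t$ to the at least $h'(t)$ colours outside $g_u(\cdot\,,c_v)^{-1}(c_u)$ and $w$ to the at least $h'(w)$ colours outside $g_v(c_u,\cdot)^{-1}(c_v)$, and adopt the simulated strategy. Given any colouring of $P$ drawn from these palettes, extend it to a colouring of $C$ by assigning $u,v$ the colours $c_u,c_v$; by construction both $u$ and $v$ guess incorrectly, so the winning cycle strategy forces some vertex of $P$ to guess correctly. Since each vertex of $P$ makes the same guess in $P$ as in $C$—the interior vertices see identical neighbourhoods, and $t,w$ were defined to reproduce their cycle guesses with $c_u,c_v$ plugged in—that vertex is also correct in $P$. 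Hence the restricted path game is winning, and because reducing a vertex's palette to any subset of size $h'(t)$ (respectively $h'(w)$) preserves winning, $\langle C\setminus\{u,v\},h'\rangle$ is winning, as required.
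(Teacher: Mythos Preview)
Your proof is correct and follows the same overall plan as the paper: fix colours $c_u,c_v$, restrict the palettes of $t$ and $w$ so that $u$ and $v$ are forced to guess incorrectly, and then transfer the cycle strategy to the path. The difference lies in how the good pair $(c_u,c_v)$ is produced. The paper proceeds in two stages: it first proves (by contradiction and pigeonhole over the colours of $u$) that some colour $z_v$ has the stronger property that \emph{for every} colour $x_u$ the set of safe $w$-colours is large enough; with $z_v$ fixed, it then takes $z_u$ to be $u$'s least frequently guessed colour. You instead count bad pairs directly via a union bound, observing that at most $(h(u)-1)h(v)$ pairs fail the $t$-condition and at most $(m-1)h(u)$ fail the $w$-condition, and that $(m-1)h(u)<h(v)$ because $m-1<h(v)/h(u)$. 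Your counting argument is shorter and more symmetric; the paper's two-stage selection yields the slightly stronger intermediate statement that a single $c_v$ works uniformly for all $c_u$, though this extra strength is not used elsewhere.
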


\begin{figure}
\begin{center}
\begin{tikzpicture}
  \def\n{6}
  \foreach \x [count=\i] in {t,...,y}{
    \pgfmathsetmacro{\angle}{180 -360/\n * (\i - 1)}
    \node[circle, draw, minimum size=1cm] (\i) at (90+\angle:2cm) {$\x$};
        \node at (180-\i*360/\n+2.5*360/\n:3cm) {$a_\x$ };
  }
 
          \node at (180-4*360/\n+2.5*360/\n:3cm) {\phantom{$\left\lceil a_w \left(1-\left\lceil\frac{a_v}{a_u}\right\rceil^{-1}\right)\right\rceil$}};
          \node at (180-1*360/\n+2.5*360/\n:3cm) {\phantom{$a_t-\left\lfloor \frac{a_t}{a_u}\right\rfloor$}};

  \tkzDefPoint(0,0){O}
  \tkzDefPoint(30:2){t}
  \tkzDefPoint(90:2){u}
  \tkzDefPoint(150:2){v}
  \tkzDefPoint(210:2){w}
  \tkzDefPoint(270:2){x}
  \tkzDefPoint(330:2){y}
  \tkzDrawArc[delta=-0.5cm, black](O,t)(u) 
  \tkzDrawArc[delta=-0.5cm, black](O,u)(v)
  \tkzDrawArc[delta=-0.5cm, black](O,v)(w)
  \tkzDrawArc[delta=-0.5cm, black](O,w)(x)
  \tkzDrawArc[delta=-0.5cm, black](O,x)(y)
  \tkzDrawArc[delta=-0.5cm, black](O,y)(t)
\end{tikzpicture}
\hspace{1.5cm}
  \begin{tikzpicture}
  \def\n{6}
  \foreach \x [count=\i] in {w,x,y,t}{
    \pgfmathsetmacro{\angle}{180 -360/\n * (\i - 1)}
    \node[circle, draw, minimum size=1cm] (\i) at (-90+\angle:2cm) {$\x$};
  }
  \foreach \x [count=\i] in {u,v}{
    \pgfmathsetmacro{\angle}{180 -360/\n * (\i - 1)}
    \node[circle, draw, dashed,minimum size=1cm] (\i) at (30+\angle:2cm) {$\x$};
  }
  
  \tkzDefPoint(0,0){O}
  \tkzDefPoint(30:2){t}
  \tkzDefPoint(90:2){u}
  \tkzDefPoint(150:2){v}
  \tkzDefPoint(210:2){w}
  \tkzDefPoint(270:2){x}
  \tkzDefPoint(330:2){y}

  \tkzDrawArc[delta=-0.5cm,dashed, black](O,u)(v)
  \tkzDrawArc[delta=-0.5cm,dashed, black](O,v)(w)
  \tkzDrawArc[delta=-0.5cm, dashed, black](O,w)(x)

  \tikzset{compass style/.append style={black}}
  \tkzDrawArc[delta=-0.5cm, black](O,t)(u) 
  \tkzDrawArc[delta=-0.5cm, black](O,x)(y)
  \tkzDrawArc[delta=-0.5cm, black](O,y)(t)

  \foreach \x [count=\i] in {t,y,x,w}{
  }
  \def\hatnesses{a_t-\left\lfloor \frac{a_t}{a_u}\right\rfloor ,a_y,a_x,
         \left\lceil a_w \left(1-\left\lceil\frac{a_v}{a_u}\right\rceil^{-1}\right)\right\rceil}
   
   \foreach \x [count=\i] in \hatnesses {
            \node at (180+\i*360/\n+.5*360/\n:3cm) {$\x$ };
  }

\end{tikzpicture}\vspace{-2em}
\end{center}
\caption{A cycle before and after applying Lemma~\ref{lem:delete}}
\label{fig:d2vlem}
\end{figure}
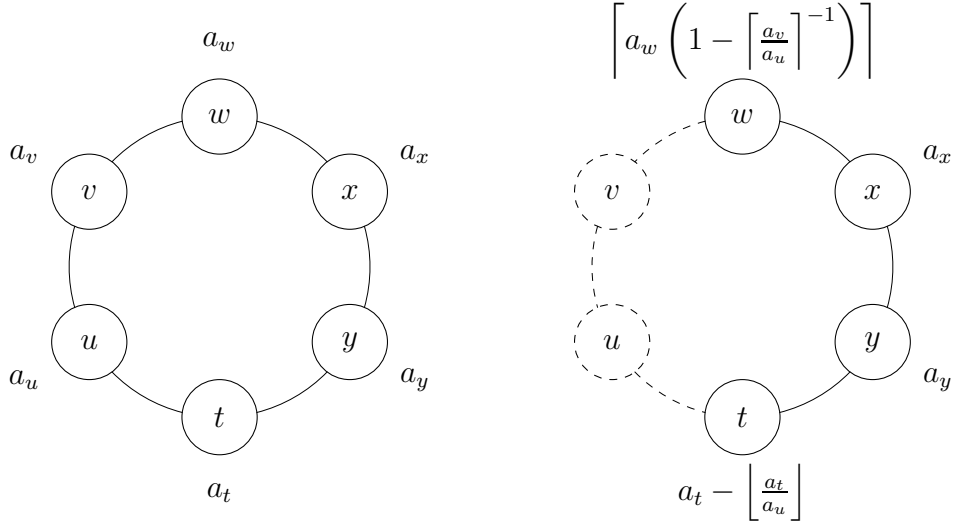

\begin{proof}
The first step is to show that there exists a color $x_v\in [h(v)]$ such that
for each color $x_u\in [h(u)]$ there is a set $X_w(x_u,x_v)\subseteq [h(w)]$ of colors for $w$ with
\begin{equation}|X_w(x_u,x_v)| \geq 
\left\lceil h(w) \left(1-\left\lceil\frac{h(v)}{h(u)}\right\rceil^{-1}\right)\right\rceil,\label{eqn:Xw}\end{equation}
so that if $v$ is colored $x_v$,  
$u$ is colored $x_u$, and $w$ is colored $x_w\in X_w(x_u,x_v)$ then $v$ guesses
incorrectly. Indeed, suppose toward a contradiction that no such $x_v$ exists. Then,
for each color $x_v\in[h(v)]$, there exists a color $x_u\in[h(u)]$ that violates Equation~\ref{eqn:Xw}.
That is, there exists a set $Y_w(x_u,x_v)\subseteq [h(w)]$ of colors for $w$ such that 
$v$ correctly guesses $x_v$ whenever $w$ is colored from $Y_w(x_u,x_v)$ and $u$ is colored $x_u$, and
\[|Y_w(x_u,x_v)|> h(w)- \left\lceil h(w) \left(1-\left\lceil\frac{h(v)}{h(u)}\right\rceil^{-1}\right)\right\rceil.\]
By the pigeonhole principle, there is some $y_u\in[h(u)]$ which is used to generate $Y_w(y_u,x_v)$
for a subset $Y_v\subseteq[h(v)]$ of colors with $|Y_v|\geq \left\lceil\frac{h(v)}{h(u)}\right\rceil$. 
Summing over this set, we get
\begin{align*}\sum_{y_v\in Y_v}|Y_w(y_u,y_v)|
&> \left\lceil\frac{h(v)}{h(u)}\right\rceil 
\left(h(w)- \left\lceil h(w) \left(1-\left\lceil\frac{h(v)}{h(u)}\right\rceil^{-1}\right)\right\rceil\right)\\
&\geq \left\lceil\frac{h(v)}{h(u)}\right\rceil \left(h(w)\left\lceil\frac{h(v)}{h(u)}\right\rceil^{-1}\right)\\
&=h(w).
\end{align*}
This is a contradiction since for a fixed color $y_u$, there are exactly $h(w)$ guesses made
by $v$. Hence, there exists some $z_v\in[h(v)]$ with the desired property. 

Fix the color of $v$ to be $z_v$. With $z_v$ fixed, fix the color of $u$ to be the least frequently 
guessed color of $u$, denoted $z_u$. By our choice of $z_u$, there are at least 
$h(t)-\left\lfloor \frac{h(t)}{h(u)}\right\rfloor$ colors remaining for $t$, all of which ensure that 
$u$ guesses incorrectly. Finally, there are 
$\left\lceil h(w) \left(1-\left\lceil\frac{h(v)}{h(u)}\right\rceil^{-1}\right)\right\rceil$ colors
for $w$ that ensure $v$ guesses incorrectly. Hence, after the vertices of $C$ commit
to a guessing strategy, we can commit to coloring the vertices of $C\backslash \{u,v\}$
using $h'(x)$ colors for each $x\in C\backslash \{u,v\}$ in a way that guarantees
$u$ and $v$ guess incorrectly. Hence, if $\cgame$ is winning 
then the corresponding winning strategy must give a winning strategy on 
$\langle C\backslash \{u,v\}, h'\rangle$. 
\end{proof}

Our next lemma handles a step present in a few cases in Theorem~\ref{theo:cycles}.
Equivalent results have been shown in other works, but we reprove the result here in terms that
are useful for our arguments. The lemma can be thought of as a ``vertex deletion'' lemma as it 
allows us to delete a vertex with hatness $5$ lying on a path without changing the winningness of the
game. 
\begin{lemma}\label{hatness-5-path}
Let $P$ be a path and let $h$ be a hatness function with $h(v)=5$ for some $v\in P$.  
The game $\langle P,h\rangle$ is winning if and only if there is a connected 
proper subgraph of $P$ which is winning. 
\end{lemma}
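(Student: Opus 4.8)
The backward implication is immediate, since a winning connected proper subgraph is a winning subgame and hence makes $\langle P,h\rangle$ winning. For the forward implication I would argue the contrapositive: assuming every connected proper subgraph of $P$ --- equivalently, every proper subpath --- is losing, I would show $\langle P,h\rangle$ is losing. Because $v$ lies on a path it has $\deg(v)\le 2$; write $u,w$ for its neighbours (only $u$ if $v$ is a leaf) and let $P_1\ni u$, $P_2\ni w$ be the components of $P\setminus\{v\}$. The whole argument runs by induction on $|V(P)|$, combining two engines: a counting/surjectivity argument at $v$ that forces the role of the hatness $5$, and a peeling reduction that shrinks the neighbours of $v$.

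The first clean case is when the product of the hatnesses of the neighbours of $v$ is at most $4$ (for a leaf, when its neighbour has hatness at most $4$). Then the guessing map of $v$, which takes its (at most $h(u)h(w)\le 4$) neighbour-colourings into $[5]$, is not surjective; colouring $v$ with an unguessed colour makes $v$ guess incorrectly for every colouring of its neighbours, with no restriction imposed on them. It therefore suffices to beat $P\setminus\{v\}$ (or the disjoint union $P_1\sqcup P_2$ in the degree-$2$ case), which is losing because each of its components is a losing proper subpath. The second engine is slicing at $v$ in the hardest small case, $P_1=\{u\}$ and $P_2=\{w\}$ (so $P$ is a three-vertex path). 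Fixing a strategy and conditioning on the colour $c\in[5]$ of $v$, the induced strategies on $P_1,P_2$ are legitimate strategies for the losing one-vertex games, so the sets $A_1(c)\subseteq[h(u)]$, $A_2(c)\subseteq[h(w)]$ of colours that $u$, respectively $w$, can receive in an all-wrong colouring have sizes $h(u)-1$ and $h(w)-1$. If for some $c$ some pair in $A_1(c)\times A_2(c)$ makes $v$ guess wrong we beat $P$; otherwise $v$'s guess is constantly $c$ on the rectangle $A_1(c)\times A_2(c)$, so these five rectangles are pairwise disjoint and $5(h(u)-1)(h(w)-1)\le h(u)h(w)$. Since $5(h(u)-1)(h(w)-1)>h(u)h(w)$ for all $h(u),h(w)\ge 2$, this is a contradiction, and it is exactly here that the value $5$ is needed.

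To reduce the remaining cases (neighbours with nontrivial arms, or a leaf $v$ whose neighbour has hatness at least $5$) to the two clean cases, I would peel the path from the outside. For an outermost vertex $t$ with neighbour $t'$, commit $t$ to its least-frequently-guessed colour --- used for at most $\lfloor h(t')/h(t)\rfloor$ colourings of $t'$ --- and restrict $t'$ to the remaining $\lceil h(t')(1-1/h(t))\rceil$ colours. Then $t$ guesses incorrectly throughout and $t'$ sees a constant, so the reduced path $\langle P\setminus\{t\},h'\rangle$ with $h'(t')=\lceil h(t')(1-1/h(t))\rceil$ being losing implies $\langle P,h\rangle$ is losing; this is a leaf analogue of Lemma~\ref{lem:delete}. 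Peeling inward from both ends lowers $h(u)$ and $h(w)$ until the neighbour product drops to at most $4$ (surjectivity finishes) or both arms become trivial (slicing finishes).

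The main obstacle is organising this induction, because the peeling step is not innocuous: lowering the hatness of an interior vertex can turn a losing proper subpath winning (decreasing a hatness preserves \emph{winning}, not losing), so the inductive hypothesis ``every proper subpath is losing'' is not automatically inherited by the reduced path; moreover peeling a hatness-$2$ leaf next to a hatness-$2$ vertex would force a hatness $1$, which trivialises the game. I expect these to be controlled together: the hypothesis already forbids two adjacent hatness-$2$ vertices (their edge would be a winning proper subpath), and one must choose which leaf to peel, and how far to reduce, so as to keep all hatnesses at least $2$ and keep a hatness-$5$ vertex present, while arguing that any subpath which becomes winning only after a reduction would already have exhibited a winning connected proper subgraph of $P$. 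Making this interaction between the counting bound and the peeling reduction precise is the technical heart of the proof.
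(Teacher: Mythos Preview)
Your proposal has a genuine gap, and the paper's route avoids it entirely. The peeling induction you describe is exactly where your argument stalls, and you say so yourself: after a peel, the reduced path $\langle P\setminus\{t\},h'\rangle$ need not inherit the hypothesis ``every connected proper subpath is losing,'' because lowering $h'(t')$ can flip a losing subpath to winning. You gesture at controlling this (``I expect these to be controlled together'') but give no mechanism; in particular you never establish that the iterated peel keeps $h'(u),h'(w)\ge 2$ all the way down to the three--vertex base case. Without that, neither of your two ``clean'' cases is ever reached in a justified way.

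The paper sidesteps the whole induction by using Theorem~\ref{theo:glue-lose} directly, rather than a leaf analogue of Lemma~\ref{lem:delete}. Let $L$ and $R$ be the components of $P\setminus\{v\}$; by hypothesis each is a losing proper subpath. The three--vertex path with hatnesses $(2,5,2)$ is losing (your rectangle argument with $h(u)=h(w)=2$ proves this, since $5\cdot1\cdot1>2\cdot2$). Now glue $L$ to one hatness--$2$ endpoint of $(2,5,2)$ and $R$ to the other, using Theorem~\ref{theo:glue-lose} twice. Because that constructor keeps the hatness of the $G_1$ side at the glued vertex, the glued endpoints recover exactly $h(u)$ and $h(w)$, and the resulting losing game is literally $\langle P,h\rangle$. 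No peeling, no induction on $|V(P)|$, and no worry about hatnesses dropping below $2$.

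Your rectangle count for the three--vertex case is correct and in fact proves more than the paper uses (it shows $(a,5,b)$ is losing for all $a,b\ge 2$, whereas the paper only needs $(2,5,2)$). The missing idea is simply to reach that base case via Theorem~\ref{theo:glue-lose} instead of via leaf peeling.
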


\begin{proof}
Let $P$ be a path and let $h$ be a hatness function with $h(v)=5$ for some $v\in P$.  
If $P$ contains a winning connected proper subgraph then $P$ is winning.
Suppose $P$ does not contain a winning connected proper subgraph and let $L, R$ 
be the resulting disjoint paths when $v$ is deleted from $P$. 
Then $L$ and $R$ are connected proper subgraphs of $P$, so they are losing
with their given hatness functions. The three vertex path with 
hatnesses $(2,5,2)$ is also losing. By Theorem~\ref{theo:glue-lose}, we can glue together
this path with $L$ on one endpoint and $R$ on the other endpoint 
to get a new losing path where the hatnesses are inherited from the
endpoints of $L$ and $R$. Note that this is exactly the game $\langle P,h\rangle$, 
so $P$ is losing. \end{proof}

Now, we are ready to prove Theorem~\ref{theo:cycles} when 
Conditions~\ref{szc},~\ref{k3},~\ref{subgame}, and~\ref{atmost4}
all fail. In particular, when Condition~\ref{atmost4} fails, there is a vertex
with hatness at least $5$. 

\begin{proof}
Let $C$ be a cycle of length at least $4$ with at least one vertex of hatness $5$ and
no winning proper subgraph. Observe that
if there are no vertices with hatness $2$,
the game is losing by \cite[Theorem 1]{WS}.
Hence, we may assume there is at least one vertex with hatness $2$. 
We may also assume without loss of generality that every vertex has hatness $2$, $3$, or $5$ 
since decreasing the hatness of a vertex does not make a winning game losing. 
Finally, on any path between two vertices of hatness $2$, 
we assume there is at most one vertex with hatness $5$ for the same reason. 

If there is a vertex with hatness $2$ adjacent to a vertex of hatness $5$
then there is one of the following sequences of hatnesses: $(2,5,2)$, 
$(5,2,3,5)$, $(5,2,3,3)$. Otherwise, there is no vertex with hatness
$2$ next to a vertex of hatness $5$, and we must have a sequence 
$(3,2,3,3)$ or $(3,2,3,5)$ in the cycle. It is easy to see that these cover 
all possible cases, with the exception of the exclusion of the case $(5,2,5,3)$. 
This can be attributed to the fact that the presence of two vertices
of hatness $5$ indicates the presence of a vertex of hatness $2$ somewhere
else in the cycle, with at most one vertex of hatness $5$ as a neighbor.  
Now, we handle each case. 

For the sequence $(2,5,2)$, we can choose a color for the vertex of hatness $5$ 
that is never guessed, leaving a path that induces proper subgame of 
$\cgame$. Hence, this path must be losing, and the result holds.

For the sequence $(5,2,3,5)$, we apply Lemma~\ref{lem:delete} with
\[h(t)=5,\, h(u)=2,\, h(v)=3,\, h(w)=5\]
to get a path with endpoints $t$ and $w$ with $h'(t)=3$ and $h'(w) = 3$. 
Observe that $t$ and $w$ have degree one in the new graph, and so they are deletable 
by Theorem~\ref{theo:glue-lose}. This leaves a proper subgame that must must be losing.
Hence, $\cgame$ is losing. 

For the sequence $(5,2,3,3)$, we again apply Lemma~\ref{lem:delete} with 
\[h(t)=5,\, h(u)=2,\, h(v)=3,\, h(w)=3\]
to get a path with endpoints $t$ and $w$ with $h'(t)=3$ and $h'(w) = 2$.
Observe that $t$ has degree one in the new graph, and so it is deletable. 
Suppose toward a contradiction that the new path is winning. Then, by
Theorem~\ref{theo:glue-win} we can glue one end of a path $(2,3,2)$ 
which is winning to $w$ and create a new graph that is still winning but where 
$w$ has hatness $4$. Then, we can decrease the hatness of $w$ to 
$3$, and the graph is still winning. However, this new graph is a proper subgraph 
of the original cycle $C$, which yields a contradiction. Hence, $\cgame$ is losing. 

For the sequence $(3,2,3,3)$,  we apply Lemma~\ref{lem:delete} with
\[h(t)=3,\, h(u)=2,\, h(v)=3,\, h(w)=3\]
to get $h'(t)=2$ and $h'(w) = 2$. Suppose toward a contradiction that 
the resulting path is winning. Then we can glue a copy of the winning
path with hatnesses $(2,2)$ to $t$ and $(2,3,2)$ to $w$, which gives a new 
winning path where $t$ and $w$ both have hatness $4$ by 
Theorem~\ref{theo:glue-win}. Decreasing
the hatnesses for $t$ and $w$ to $3$ preserves the winningness of the path.
Note that this path contains a vertex of hatness $5$. By 
Lemma~\ref{hatness-5-path}, there must be a winning connected
proper subgraph. However, any connected proper subgraph 
of this path cannot contain both endpoints and  
is thus also a proper subgraph of the original cycle,
yielding a contradiction. Hence, $\cgame$ is losing. 

For the sequence $(3,2,3,5)$, we apply Lemma~\ref{lem:delete} with 
\[h(t)=3,\, h(u)=2,\, h(v)=3,\, h(w)=5\]
to get $h'(t)=2$ and $h'(w) = 3$. Since $w$ has hatness $3$ in the new graph,
$w$ is deletable. After deleting $w$, suppose toward a contradiction that the graph is 
winning. Then we can glue an edge with hatnesses $(2,2)$ to $t$, 
creating a winning graph where $t$ has hatness $4$. We can
then decrease the hatness of $t$ to $3$. This graph is 
now a proper subgame of $\cgame$, which must be losing,
a contradiction. Hence, $\cgame$ is losing.
\end{proof}

\section{Cactus Graphs}\label{sec:cactus}

We start by proving the general upper bound that every
cactus graph has hat guessing number at most $4$, which
proves Theorem~\ref{theo:cactus}, Statement~\ref{two-t}. 
Then, we show how the same argument can be modified
to prove Statement~\ref{one-t}. Finally, we observe that
Statement~\ref{no-t} follows immediately from~\cite{KL18}.

\begin{defi} Let $G$ be a cactus graph with a cycle $C$. We 
say that $C$ is a \emph{leaf cycle} if deleting the edges of $C$ 
leaves at most $1$ connected component with a cycle (so all
other connected components are acyclic).
\end{defi}
Figure~\ref{fig:cactus} depicts the leaf cycles of a cactus graph in bold. 

\begin{figure}
\begin{center}
 \begin{tikzpicture}
  \matrix (m) [matrix of math nodes,row sep=0.4cm,column sep=0.4cm, nodes={circle,draw,thick}]
  {
      \ & \ & |[ultra thick]| \ & |[ultra thick]| \  &  & \ & \ &  & |[ultra thick]|\ &  & |[ultra thick]|\ &  &  &   \\
        & \ & |[ultra thick]| \ &    & |[ultra thick]| \ &  &  & \ &  & |[ultra thick]|\ &  &  & |[ultra thick]|\ &   \\ 
      \  & \ &  & |[ultra thick]|  \  &  & \ & \ &  & \ & \ & \ &|[ultra thick]| \ &  & |[ultra thick]|\  \\
        & \ &  &    &  & \ &  & \ &  & \ &  & |[ultra thick]|\ &  & |[ultra thick]|\   \\
        &  &  &    & \ &  &  &  & \ &  &  &  & |[ultra thick]|\ &   \\
 };
  \path[semithick]
	(m-1-1) 	edge (m-1-2)
	(m-1-2) 	edge (m-1-3)
			edge (m-2-2)
	(m-2-2) 	edge (m-3-2)
	(m-2-5)	edge (m-1-6)
			edge (m-3-6)
	(m-3-2) 	edge (m-4-2) 
	(m-3-1) 	edge (m-3-2)
	(m-1-6) 	edge (m-1-7)
	(m-1-7) 	edge (m-2-8)
	(m-2-8) 	edge (m-3-7)
	(m-3-6) 	edge (m-3-7)
			edge (m-4-6)
	(m-4-6) 	edge (m-5-5)
	(m-3-7) 	edge (m-4-8)
	(m-4-8) 	edge (m-3-9)
			edge (m-5-9)
	(m-5-9)	edge (m-4-10)
	(m-4-10)	edge (m-3-9)
			edge (m-3-10)
	(m-3-10) 	edge (m-3-11)
			edge (m-2-10)
	(m-3-11) 	edge (m-3-12);
\path[ultra thick]
	(m-2-5) 	edge (m-3-4)
	(m-2-3) 	edge (m-3-4)
	(m-1-3) 	edge (m-2-3)
			edge (m-1-4)
	(m-1-4) 	edge (m-2-5)
	(m-3-12) 	edge (m-2-13)
			edge (m-4-12)
	(m-4-12) 	edge (m-5-13)
	(m-2-10)	edge (m-1-11)
			edge (m-1-9)
	(m-1-9)	edge (m-1-11)
	(m-5-13) 	edge (m-4-14)
	(m-3-14) 	edge (m-4-14)
			edge (m-2-13);
\end{tikzpicture}\vspace{-1em}
\end{center}
\caption{A cactus graph with the leaf cycles in bold.}
\label{fig:cactus}
\end{figure}
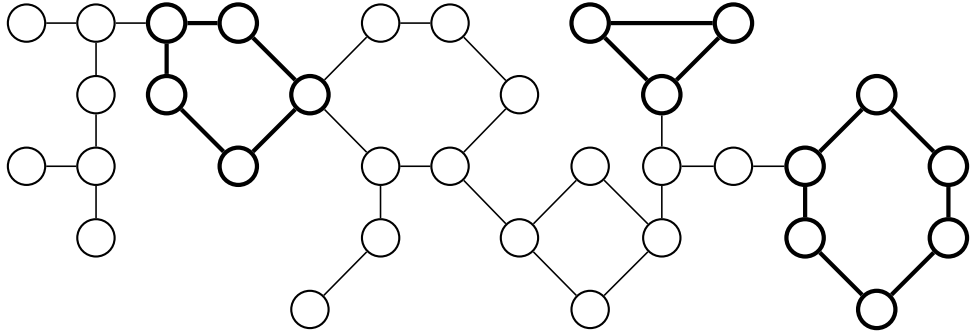

\begin{proof}[Proof of Theorem~\ref{theo:cactus}]
First, we prove the general upper bound of $HG(G)\leq 4$ for all cactus graphs $G$ by 
induction on the number of vertices. The base case is trivial. 
Suppose that the result holds for all cactus graphs on $n\geq 1$ vertices, and 
let $G$ be a cactus graph with $n+1$ vertices. We show that 
$HG(G)<5$. If $G$ has a vertex $v$ with degree $1$ then 
\cite[Theorem~1.8]{ABST} shows that $HG(G)=HG(G\backslash\{v\})$, and the 
result holds by induction. Thus, we may assume that $G$ has no
vertices of degree $1$. If $G$ has no cycles then $G$ is a tree, and 
$HG(G)=2$. Hence, we may also assume $G$ has at least one cycle.
In particular, this implies $G$ has a non-empty leaf cycle $C$. We can decompose 
$G$ as $G=C+_{v,v}G'$ at a vertex $v\in C\cap G'$,
where $G'$ is a cactus graph on fewer than $n+1$ vertices.
By the induction hypothesis, $HG(G')< 5$, so the game $\langle G',\star 5\rangle$ is losing.
Theorem~\ref{theo:cycles} shows that $\cgame$ is losing
where $h(v)=2$ and $h(w)=5$ for all $w\neq v$. Then $\langle G, \star 5\rangle$ is losing by
Theorem~\ref{theo:glue-lose}, so $HG(G)<5$ and the result holds.

The same argument can easily be modified to prove the upper bound for
Statement~\ref{one-t}, by assuming $HG(G)<4$ in the induction hypothesis
for graphs satisfying Statement~\ref{one-t}. If the graph has exactly one cycle,
then it is a pseudotree and the result holds by \cite{KL18}. Otherwise, $G$
has at least two cycles and thus, at least two leaf cycles.
Statement~\ref{one-t} ensures at most one triangle, so we can choose a non-triangle
leaf cycle $C$ in $G$ and write $G=C+_{v}G'$, where $G'$ is a cactus graph on 
fewer than $n+1$ vertices. Then $\cgame$ is losing where $h(v)=2$ and $h(w)=4$
for all $w\neq v$, and $\langle G',\star 4\rangle$ is losing by the induction hypothesis,
so $\langle G,\star 4\rangle$ is losing by Theorem~\ref{theo:glue-lose}. 

As Statement~\ref{no-t} follows from \cite{KL18}, the proof of 
Theorem~\ref{theo:cactus} is complete.
\end{proof}

\section{Open Problems}

Theorem~\ref{theo:cycles} (along with some lemmas about leaf deletion) 
can extend the result for pseudotrees \cite{KL18} to arbitrary hatness 
functions~\cite{IMJ}. Naturally, we would like to do so for graphs with 
more than one cycle. The treelike structure of cactus graphs, which this paper 
leveraged, could help make them the next step. One challenge that must be 
overcome is that the hatness for any individual vertex of a winning cactus
graph may be arbitrarily large. For example, we can take $n$ copies of the 
triangle with hatnesses $(2,4,4)$ and glue them together at the vertices of 
hatness $2$. The resulting graph is a winning cactus with a vertex of hatness $2^n$. 
Even worse, we can glue multiple copies of this graph together to
get arbitrarily many vertices with hatness $2^n$. 

Instead, we could turn to \emph{Theta graphs}, consisting of three internally vertex 
disjoint paths that share common endpoints. Thetas are the ``simplest'' non-cactus graphs, 
and yet we lack even an analogue to Theorem~\ref{theo:cactus}. Every theta has maximum
degree $3$, so the hat guessing number of a theta is trivially at most $8$
by \cite{MF}. Careful analysis using a generalized 
version of Lemma~\ref{lem:delete} shows that thetas have hat 
guessing number at most $4$. We postpone a full characterization 
of thetas for both constant and general hatness functions to a future work.  \\

\noindent {\bf Acknowledgment:}
We thank Noga Alon for connecting the authors and Stephen Melczer for helpful suggestions.

\pagebreak

\end{document}